\newcommand{\N}{{\mathbb N}}
\newcommand{\R}{{\mathbb R}}
\newcommand{\Z}{{\mathbb Z}}
\newcommand{\Q}{{\mathbb Q}}
\newcommand{\T}{{\mathcal T}}
\newcommand{\A}{{\mathcal A}}
\newcommand{\Gal}{{\mathrm{Gal}}}
\newcommand{\Sel}{{\mathrm{Sel}}}
\newcommand{\dimF}{{\mathrm{dim}_{\mathbb{F}_2}}}
\newcommand{\Ftwo}{{\mathbb{F}_2}}
\newcommand{\F}{{\mathbb{F}}}
\newcommand{\Zt}{{\mathbb{Z}/2\mathbb{Z}}}
\newcommand{\hatphi}{{ \hat \phi }}
\newcommand{\p}{{ \mathcal{P} }}
\newcommand{\ord}{{ \mathrm{ord} }}
\newtheorem{thm}{\bf{Theorem}}
\newtheorem{theorem}{\bf{Theorem}}[section]
\newtheorem{proposition}[theorem]{\bf{Proposition}}
\newtheorem{lemma}[theorem]{\bf{Lemma}}
\theoremstyle{definition}
\newtheorem{definition}[theorem]{\bf{Definition}}
\DeclareSymbolFont{cyrletters}{OT2}{wncyr}{m}{n}
\DeclareMathSymbol{\Sha}{\mathalpha}{cyrletters}{"58}
\begin{document}

\bibliographystyle{alpha}



\title[{\tiny Distribution of Selmer Ranks of Twists of Elliptic Curves with Partial Two-Torsion}]{On the Distribution of 2-Selmer Ranks within Quadratic Twist Families of Elliptic Curves with Partial Rational Two-Torsion}
\author{Zev Klagsbrun}

\begin{abstract}This paper presents a new result concerning the distribution of 2-Selmer ranks in the quadratic twist family of an elliptic curve with a single point of order two that does not have a cyclic 4-isogeny defined over its two-division field. We prove that at least half of all the quadratic twists of such an elliptic curve have arbitrarily large 2-Selmer rank, showing that the distribution of 2-Selmer ranks in the quadratic twist family of such an elliptic curve differs from the distribution of 2-Selmer ranks in the quadratic twist family of an elliptic curve having either no rational two-torsion or full rational two-torsion.
\end{abstract}

\maketitle

\pagenumbering{arabic}

\section{Introduction}

\subsection{Distributions of Selmer Ranks}

Let $E$ be an elliptic curve defined over $\Q$ and let $\Sel_2(E/\Q)$ be its 2-Selmer group (see Section \ref{bg} for its definition). We define the 2-Selmer rank of $E/\Q$, denoted $d_2(E/\Q)$, by $$d_2(E/\Q) =  \dimF  \Sel_2(E/\Q) - \dimF E(\Q)[2].$$ 

For a given elliptic curve and non-negative integer $r$, we are able to ask what proportion of the quadratic twists of $E$ have 2-Selmer rank equal to $r$.

Let $S(X)$ be the set of squarefree natural numbers less than or equal to $X$. Heath-Brown proved that for the congruent number curve $y^2 = x^3 - x$, there are explicits constants $\alpha_0, \alpha_1, \alpha_2, \ldots$ summing to one such that $$\lim_{X \rightarrow \infty}\frac{|\{ d  \in S(X)  : d_2(E^d/\Q) = r  \} |}{|S(X)|} = \alpha_r$$ for every $r \in  \Z^{\ge 0}$, where $E^d$ is the quadratic twist of $E$ by $d$ \cite{HB}. This result was extended by Swinnerton-Dyer and Kane to all elliptic curves $E$ over $\Q$ with $E(\Q)[2] \simeq \Zt \times \Zt$ that do not have a cyclic 4-isogeny defined over $\Q$ \cite{Kane}, \cite{SD}.  More recently, Klagsbrun, Mazur, and Rubin showed that a version of this result is true for curves $E$ with $E(\Q)[2] = 0$ and $\Gal(\Q(E[2])/\Q) \simeq \mathcal{S}_3$ when a different method of counting is used \cite{KMR}. These results state that if the mod-4 representation of a curve $E$ satisfies certain conditions, then there is a discrete distribution on 2-Selmer ranks within the quadratic twist family of $E$. We show that this is not the case when $E(\Q)[2] \simeq \Zt$ and $E$ does not have a cyclic 4-isogeny defined over $\Q(E[2])$. Specifically, we prove the following:

\begin{thm}\label{mainthm}
Let $E$ be an elliptic curve defined over $\Q$ with $E(\Q)[2] \simeq \Z/2\Z$ that does not have a cyclic isogeny defined over $\Q(E[2])$. Then for any fixed $r$, $$\liminf_{X\rightarrow \infty}\frac{ \left | \{ d \in S(X) : d_2(E^d/\Q) \ge r \} \right | }{|S(X)|}  \ge \frac{1}{2}$$ and $$\liminf_{X\rightarrow \infty}\frac{ \left | \{ \pm d \in S(X) : d_2(E^d/\Q) \ge r \} \right | }{2|S(X)|}  \ge \frac{1}{2}.$$
\end{thm}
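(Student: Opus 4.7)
The plan is to work with the $2$-isogeny $\phi\colon E\to E'$, where $E'=E/\langle T\rangle$ for the unique nontrivial point $T\in E(\Q)[2]$, together with its dual $\hatphi\colon E'\to E$. For each squarefree $d$ one has the twisted isogenies $\phi_d\colon E^d\to E'^d$ and $\hatphi_d\colon E'^d\to E^d$, and the standard four-term exact sequence comparing $\Sel_{\phi_d}$, $\Sel_{\hatphi_d}$, and $\Sel_2$ yields
\[
d_2(E^d/\Q)\;\ge\;\max\!\bigl(\dimF \Sel_{\phi_d}(E^d/\Q),\ \dimF \Sel_{\hatphi_d}(E'^d/\Q)\bigr)-1.
\]
Hence it suffices to show that for at least half of all squarefree $d$, at least one of these two $\Ftwo$-dimensions is $\ge r+1$. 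I would also note here that $d_2(E^d/\Q)=d_2(E'^d/\Q)$ by the isogeny invariance of the $2$-Selmer rank, which lets us freely pass between the two sides.

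The core of the argument is then a pairing strategy in the spirit of Heath-Brown, Kane, Swinnerton-Dyer, and Klagsbrun-Mazur-Rubin. I would fix a finite set $\Sigma$ containing $2$, $\infty$, and the primes of bad reduction for $E$, and restrict attention to squarefree $d$ supported outside $\Sigma$. For such $d$, I would select via Chebotarev in a suitable extension of $\Q$ (for instance the compositum of $\Q(E[4])$ with a small ray class field) a prime $q\nmid d$ in a carefully prescribed Frobenius class, and then compare $\Sel_{\phi_{dq}}(E^{dq}/\Q)$ with $\Sel_{\phi_d}(E^d/\Q)$ by tracking the change in local conditions at $q$ and at the primes dividing $d$. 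The expected output is that the pair
\[
\bigl(\dimF\Sel_{\phi_d}(E^d/\Q),\ \dimF\Sel_{\hatphi_d}(E'^d/\Q)\bigr)
\]
evolves under a simple Markov chain on $\mathbb{Z}_{\ge 0}^{2}$ whose transitions are governed by Hilbert symbols and which, crucially, has a positive drift toward infinity in at least one coordinate.

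The non-cyclic-$4$-isogeny hypothesis over $\Q(E[2])$ enters at precisely this step. It guarantees that the Galois action on $E[4]/\langle T\rangle$ is sufficiently nondegenerate that the Chebotarev classes of primes producing a $+1$ transition in $\dimF\Sel_{\phi}$ or $\dimF\Sel_{\hatphi}$ have positive density; without it, the chain would collapse and one would recover a discrete distribution of the kind Kane and Swinnerton-Dyer found in the $\ZtZt$ case. Once this local picture is in place, one pairs each squarefree $d$ with $dq(d)$ for a suitable $q(d)$ in the prescribed class and verifies that in each pair $\{d,dq(d)\}$ at least one twist has $\dimF\Sel_{\phi}\ge r+1$ or $\dimF\Sel_{\hatphi}\ge r+1$; summing over pairs and letting $X\to\infty$ gives the lower bound $\tfrac12$. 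The second inequality, over $\pm d$, should then follow by running the same argument on the family $\{E^{-d}\}$ in parallel, equivalently by incorporating the archimedean place into the local analysis.

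The hard part will be the step of comparing $\Sel_{\phi_{dq}}(E^{dq}/\Q)$ with $\Sel_{\phi_d}(E^d/\Q)$ and showing that the chain really is biased in the required way. Because $E(\Q)[2]\simeq\Zt$ rather than $\ZtZt$, the local images of $E'^d(\Q_\ell)/\phi_d E^d(\Q_\ell)$ and $E^d(\Q_\ell)/\hatphi_d E'^d(\Q_\ell)$ at a prime $\ell\mid d$ need not be Tate-dual in the symmetric way that occurs for full $2$-torsion, and it is this asymmetry that forces one of the two Selmer dimensions to grow without bound along a positive-density family of $d$. Turning this asymmetry into a clean Markov model, and arranging the pairing carefully enough that every pair contributes at least one large-rank twist, is the technical obstacle I would expect to spend the most effort on.
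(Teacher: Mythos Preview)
Your proposal has a genuine gap in the mechanism by which you hope to produce many twists of large rank. The pairing step cannot work as stated: twisting by a single auxiliary prime $q$ changes each of $\dimF\Sel_{\phi_d}$ and $\dimF\Sel_{\hatphi_d}$ by at most a bounded amount (in fact by at most $1$ under the relevant local conditions), so there is no reason that in a pair $\{d,dq(d)\}$ at least one member has $\dimF\Sel_{\phi}\ge r+1$ or $\dimF\Sel_{\hatphi}\ge r+1$ once $r$ is large. Relatedly, your claim of a ``positive drift toward infinity in at least one coordinate'' is not what actually occurs: the difference $\dimF\Sel_{\phi_d}-\dimF\Sel_{\hatphi_d}=\ord_2\T(E^d/E^{\prime d})$ has bounded mean as $d$ varies, so neither coordinate drifts. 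What really happens is that this difference has \emph{growing variance}, of order $\tfrac12\log\log X$, and it is a central-limit phenomenon rather than a drift phenomenon that forces $|\ord_2\T(E^d/E^{\prime d})|$ (and hence $\max(\dimF\Sel_{\phi_d},\dimF\Sel_{\hatphi_d})$) to be large for asymptotically half of all $d$.

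The paper's route is accordingly quite different from yours and much shorter. One observes via Cassels' product formula that $\ord_2\T(E^d/E^{\prime d})$ equals, up to a bounded error coming from the places dividing $2\Delta\infty$, the additive function
\[
g(d)=\sum_{\substack{p\mid d\\ p\nmid 2\Delta}}\frac{\bigl(\tfrac{\Delta'}{p}\bigr)-\bigl(\tfrac{\Delta}{p}\bigr)}{2},
\]
where $\Delta,\Delta'$ are discriminants of $E,E'$. The no-cyclic-$4$-isogeny hypothesis is used exactly once, to show that $\Q(E[2])$ and $\Q(E'[2])$ are distinct quadratic fields, so that $\Delta,\Delta',\Delta\Delta'$ are all nonsquares; this forces $A(x)=\sum_{p\le x}g(p)/p=O(1)$ while $B(x)^2=\sum_{p\le x}g(p)^2/p\sim\tfrac12\log\log x$. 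An Erd\H{o}s--Kac theorem for squarefree integers then gives that $g(d)/\sqrt{\tfrac12\log\log X}$ is asymptotically standard normal, so for any fixed $r$ the proportion of $d\in S(X)$ with $\ord_2\T(E^d/E^{\prime d})\ge r$ tends to $\tfrac12$. Since $d_2(E^d/\Q)\ge\ord_2\T(E^d/E^{\prime d})-2$, the theorem follows; the $\pm d$ statement comes from applying the same argument to $E^{-1}$. No Markov chain, Chebotarev pairing, or tracking of individual Selmer groups is needed.
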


In particular, this shows that there is not a distribution function on 2-Selmer ranks within the quadratic twist family of $E$.

Theorem \ref{mainthm} is proved by way of the result. 

\begin{thm}\label{Tconverge}  Let $E$ be an elliptic curve defined over $\Q$ with $E(\Q)[2] \simeq \Z/2\Z$ that does not have a cyclic isogeny defined over $\Q(E[2])$. Then the normalized distribution $$\displaystyle \frac{P_r(\T(E/E^\prime), X)}{\sqrt{\frac{1}{2} \log \log X}}$$ converges weakly to the Gaussian distribution $$G(z) = \frac{1}{\sqrt{2\pi}} \int_{-\infty}^{z} \! e^\frac{w^2}{2} \, \mathrm{d}w,$$ where $$P_r(\T(E/E^\prime), X) = \displaystyle{ \frac{ |\{d \in S(X) : \ord_2 \T(E^d/E^{\prime d}) \le r \}| }{|S(X)|} }$$ for $X \in \R^+$, $r \in \Z^{\ge 0}$, and $\T(E^d/E^{\prime d})$ as defined in Section \ref{bg}.

\end{thm}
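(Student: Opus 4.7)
The plan is to view Theorem~\ref{Tconverge} as an Erd\H{o}s--Kac type central limit theorem for the arithmetic function $d \mapsto \ord_2 \T(E^d/E'^d)$ on squarefree $d \in S(X)$. The starting point is to write $\ord_2 \T(E^d/E'^d)$ as a sum of local contributions. Using the two descent exact sequences for the rational $2$-isogeny $\phi : E \to E'$ whose kernel is $E(\Q)[2]$, together with a Cassels-type product formula for the ratio $|\Sel^\phi(E^d/\Q)|/|\Sel^{\hatphi}(E'^d/\Q)|$, one obtains
$$\ord_2 \T(E^d/E'^d) \;=\; \sum_v t_v(d),$$
where $t_v(d)$ is the $2$-adic valuation of a quotient of local $\phi$-cohomology images at the place $v$.

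Next I would analyze the terms $t_v(d)$ as $d$ varies over $S(X)$. For a prime $p$ of good reduction for $E$ with $p \nmid d$, $t_p(d) = 0$; for the finitely many exceptional places (primes of bad reduction, $p=2$, and the archimedean place), the contribution is $O(1)$ uniformly in $d$. For odd primes $p \mid d$, one checks that $t_p(d)$ takes only finitely many values, governed by the residue class of $d$ modulo $p$ through a Legendre-type condition encoding the image of local $\phi$-descent on $E^d(\Q_p)$. The hypothesis that $E$ has no cyclic $4$-isogeny over $\Q(E[2])$ enters crucially here: it ensures that the Galois representation of $\Q$ on $E[4]$ is large enough that, by Chebotarev, a positive density of primes $p$ is ``active,'' in the sense that $t_p(d)$ genuinely fluctuates as $d$ ranges over residue classes modulo~$p$.

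Having set this up, I would carry out a method-of-moments argument along the lines of Heath-Brown, Kane, and Klagsbrun--Mazur--Rubin. A typical $d \in S(X)$ has $\sim \log \log X$ prime divisors (Hardy--Ramanujan), a positive proportion of which are active. Joint equidistribution of Frobenius at distinct primes dividing $d$, via Chebotarev in the compositum of $\Q(E[4])$ with suitable cyclotomic fields, shows that the active $t_p(d)$ are asymptotically independent random variables whose individual variances combine to give a total variance of $\tfrac{1}{2}\log\log X$ (the factor $\tfrac{1}{2}$ reflecting the density of active primes among those dividing $d$). A standard mixed-moment calculation then identifies every moment of the normalized sum with the moments of the standard Gaussian $G$, yielding the asserted weak convergence.

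The main obstacle, I expect, is the arithmetic input rather than the probabilistic machinery: one must produce the clean local contribution formula for $\ord_2 \T$ in the partial two-torsion case, and verify that the non-cyclic-$4$-isogeny hypothesis is exactly what forces a positive density of active primes. In the $E(\Q)[2] = 0$ and $E(\Q)[2] \simeq \ZtZt$ cases treated in earlier work, the $\phi$- and $\hat\phi$-descents are either absent or related by a parity obstruction that keeps $\ord_2 \T$ bounded; here they are genuinely asymmetric and the contributions accumulate like an unbalanced random walk on $\Z$. Once that structural point is secured, the convergence to $G$ should follow by now well-trodden moment-method techniques.
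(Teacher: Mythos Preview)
Your overall strategy---write $\ord_2\T(E^d/E'^d)$ as a sum of local terms and then invoke a central limit theorem on squarefree integers---is exactly the paper's. The execution in the paper, however, is considerably more direct than what you sketch, and one of your intermediate claims is slightly off.

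The simplification you are missing is that for an odd prime $p\nmid 2\Delta$ with $p\mid d$, the local term $t_p(d)$ depends only on $p$, not on $d$ (beyond the bare fact $p\mid d$). By Lemma~\ref{localconds}, $\dim_{\F_2} H^1_\phi(\Q_p,C^d)-1$ is determined by whether $E(\Q_p)[2]$ and $E'(\Q_p)[2]$ are full, i.e.\ by the pair of Legendre symbols $\bigl(\tfrac{\Delta}{p}\bigr)$ and $\bigl(\tfrac{\Delta'}{p}\bigr)$; explicitly it equals $\tfrac12\bigl[\bigl(\tfrac{\Delta'}{p}\bigr)-\bigl(\tfrac{\Delta}{p}\bigr)\bigr]\in\{-1,0,1\}$. (Your statement that $t_p(d)$ is ``governed by the residue class of $d$ modulo $p$'' cannot be right, since $p\mid d$ forces $d\equiv 0\pmod p$.) Hence $d\mapsto \ord_2\T(E^d/E'^d)$ is, up to an $O(1)$ from the places dividing $2\Delta\infty$, a genuine additive function $g(d)=\sum_{p\mid d}g(p)$ with $|g(p)|\le 1$, and the paper simply applies a packaged Erd\H{o}s--Kac theorem for squarefree integers (Theorem~\ref{EKSF}); no joint Chebotarev equidistribution or KMR-style moment computation by hand is needed.

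The no-cyclic-$4$-isogeny hypothesis is likewise used more cheaply than you propose. Rather than controlling the image of the mod-$4$ representation, the paper uses it (Proposition~\ref{charac}) only to show that $\Q(E[2])$ and $\Q(E'[2])$ are \emph{distinct} quadratic extensions of $\Q$, so that $\Delta\Delta'$ is a non-square. That is precisely what gives $B(x)^2=\sum_{p\le x}g(p)^2/p\sim\tfrac12\log\log x$ (the primes with $g(p)\ne 0$ are those with $\bigl(\tfrac{\Delta\Delta'}{p}\bigr)=-1$, density $\tfrac12$) and $A(x)=\sum_{p\le x}g(p)/p=O(1)$. Only the $2$-torsion fields and two quadratic characters enter; $\Q(E[4])$ never appears. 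Your route would work, but it imports machinery the problem does not require.
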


Theorem \ref{Tconverge} will follow from a variant of the Erd{\"o}s-Kac theorem for squarefree numbers which is proved in Appendix \ref{apf}.

Xiong and Zaharescu proved results similar to Theorems \ref{mainthm} and \ref{Tconverge} in the special case when $E(\Q)[2] \simeq \Z/2\Z$ and $E$ has a cyclic 4-isogeny defined over $\Q$ \cite{XZ}.

\subsection{Layout}
We begin in Section \ref{bg} by recalling the definitions of the 2-Selmer group and the Selmer groups associated with a 2-isogeny $\phi$ and presenting some of the connections between them. Section \ref{twistplace} examines the behavior of the local conditions for the $\phi$-Selmer groups under quadratic twist. We prove Theorems \ref{mainthm} and \ref{Tconverge} in Section \ref{pfofmain} by combining the results of Sections \ref{bg} and \ref{twistplace} with a variant of the Erd{\"o}s-Kac theorem for squarefree numbers which we prove in Appendix \ref{apf}.

\subsection{Acknowledgements}
I would like to express my thanks to Karl Rubin for his helpful comments and suggestions, to Ken Kramer for a series of valuable discussions, as well as to Michael Rael and Josiah Sugarman for helpful conversations regarding the Erd{\"o}s-Kac theorem.

\section{Selmer Groups}\label{bg}

We begin by recalling the definition of the 2-Selmer group. If $E$ is an elliptic curve defined over a field $K$, then $E(K)/2(K)$ maps into $H^1(K, E[2])$ via the Kummer map. The following diagram commutes for every place $p$ of $\Q$, where $\delta$ is the Kummer map.

\begin{center}\leavevmode
\begin{xy} \xymatrix{
E(\Q)/2E(\Q)  \ar[d] \ar[r]^{\delta} & H^1(\Q, E[2]) \ar[d]^{Res_p} \\
E(\Q_p)/2E(\Q_p)   \ar[r]^{\delta} & H^1(\Q_p, E[2]) }
\end{xy}\end{center}

For each place $p$ of $\Q$, we define a distinguished local subgroup $H^1_f(\Q_p, E[2]) \subset H^1(\Q_p, E[2])$ by $\text{Image} \left (\delta: E(\Q_p)/2E(\Q_p) \hookrightarrow H^1(\Q_p, E[2]) \right )$. We define the \textbf{2-Selmer group} of $E/\Q$, denoted $\Sel_2(E/\Q)$, by $$\Sel_2(E/\Q) = \ker \left ( H^1(\Q, E[2]) \xrightarrow{\sum res_p} \bigoplus_{p\text{ of } \Q} H^1(\Q_p, E[2])/H^1_f(\Q_p, E[2]) \right ).$$

The  $2$-Selmer group is a finite dimensional $\F_2$-vector space that sits inside the exact sequence of $\F_2$-vector spaces $$0\rightarrow E(\Q)/2E(\Q) \rightarrow \Sel_2(E/\Q) \rightarrow \Sha(E/\Q)[2] \rightarrow 0$$ where $\Sha(E/\Q)$ is the Tate-Shafaravich group of $E$.

\begin{definition}
We define the \textbf{2-Selmer rank of $E$}, denoted $d_2(E/\Q)$, by $$d_2(E/\Q) =  \dimF   \Sel_2(E/\Q) - \dimF  E(\Q)[2].$$
\end{definition}

If $E(\Q)$ has a point of order two, then we can define a Selmer group arising from the two-isogeny with kernel generated by that point. Explicitly, if $E$ is an elliptic curve defined over $\Q$ with $E(\Q)[2] \simeq \Zt$, then $E$ can be given by a model $y^2 = x^3 + Ax^2 + Bx$ with $A, B \in \Z$. The subgroup  $C = E(\Q)[2]$ is then generated by the point $P = (0,0)$.

Given this model, we are able to define an isogeny $\phi:E \rightarrow E^\prime$ with kernel $C$, where $E^\prime$ is given by the model $y^2 = x^3 - 2Ax^2 + (A^2 - 4B)x$ and $\phi$ is given by $\phi(x, y) = \left( \left (\frac{x}{y} \right)^2, \frac{y(B-x^2)}{x^2} \right )$ for $(x, y) \not \in C$. The isogeny $\phi$ gives rise to a pair of Selmer groups.

The isogeny $\phi$ gives a short exact sequence of $G_\Q$ modules \begin{equation} 0 \rightarrow C \rightarrow E(\overline{\Q}) \xrightarrow{\phi}  E^\prime(\overline{\Q}) \rightarrow 0.\end{equation} This sequence gives rise to a long exact sequence of cohomology groups $$0 \rightarrow C \rightarrow E(\Q) \xrightarrow{\phi} E^\prime(\Q) \xrightarrow{\delta} H^1(\Q, C) \rightarrow H^1(\Q, E) \rightarrow H^1(\Q, E^\prime) \ldots$$ The map $\delta$ is given by $\delta(Q)(\sigma)= \sigma(R) - R$ where $R$ is any point on $E$ with $\phi(R) = Q$. 

This sequence remains exact when we replace $\Q$ by its completion $\Q_p$ at any place $p$, which gives rise to the following commutative diagram.

\begin{center}\leavevmode
\begin{xy} \xymatrix{
E^\prime(\Q)/\phi(E(\Q))  \ar[d] \ar[r]^\delta & H^1(\Q, C) \ar[d]^{Res_p} \\
E^\prime(\Q_p)/\phi(E(\Q_p))   \ar[r]^\delta & H^1(\Q_p, C) }
\end{xy}\end{center}

In a manner similar to how we defined the 2-Selmer group, we define distinguished local subgroups $H^1_\phi(\Q_p, C)\subset  H^1(\Q_p, C)$ as the image of $E^\prime(\Q_p)/\phi(E(\Q_p))$ under $\delta$ for each place $p$ of $\Q$.  We define the \textbf{$\mathbf \phi$-Selmer group of $\mathbf E$}, denoted $\Sel_\phi(E/\Q)$ as  $$\Sel_\phi(E/\Q) = \ker \left ( H^1(\Q, C) \xrightarrow{\sum res_p} \bigoplus_{p \text{ of } \Q} H^1(\Q_p, C)/H^1_\phi(\Q_p, C) \right ).$$ The group $\Sel_\phi(E/\Q)$ is a finite dimensional $\Ftwo$-vector space and we denote its dimension $\dimF \Sel_\phi(E/\Q)$ by $d_\phi(E/\Q)$.

The isogeny $\phi$ on $E$ gives gives rise to a dual isogeny $\hat \phi$ on $E^\prime$ with kernel $C^\prime = \phi(E[2])$. Exchanging the roles of $(E, C, \phi)$ and $(E^\prime, C^\prime, \hat \phi)$ in the above defines the $\mathbf{\hat \phi}$\textbf{-Selmer group}, $\Sel_\hatphi(E^\prime/\Q)$, as a subgroup of $H^1(\Q, C^\prime)$. The following two theorems allow us to compare the $\phi$-Selmer group, the $\hat \phi$-Selmer group, and the 2-Selmer group .

\begin{theorem}\label{gss}The $\phi$-Selmer group, the $\hat \phi$-Selmer group, and the 2-Selmer group sit inside the exact sequence \begin{equation}0 \rightarrow E^\prime(\Q)[2]/\phi(E(\Q)[2]) \rightarrow \Sel_\phi(E/\Q) \rightarrow \Sel_2(E/\Q) \xrightarrow{\phi}\Sel_\hatphi(E^\prime/\Q).\end{equation}
\end{theorem}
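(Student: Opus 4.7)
The approach is to take $G_\Q$-cohomology of the short exact sequence of $G_\Q$-modules
$$
0 \to C \to E[2] \xrightarrow{\phi} C' \to 0,
$$
which is exact because $[2]_E = \hatphi \circ \phi$ forces $\phi(E[2]) \subseteq \ker \hatphi = C'$, with equality by an order count. The plan is to compare the resulting long exact sequence, both globally and at each completion $\Q_p$, with the Kummer sequences for $\phi$, $[2]_E$, and $\hatphi$, exploiting the factorizations $\hatphi\phi = [2]_E$ and $\phi\hatphi = [2]_{E'}$ to tie the three descents together, and then restrict to Selmer subgroups.

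Taking $G_\Q$-cohomology of the displayed sequence, using that $C$ and $C'$ are $G_\Q$-fixed and that the hypothesis $E(\Q)[2] \simeq \Zt$ gives $E[2]^{G_\Q} = C$ (so the induced map $C \to C'$ is $\phi|_C = 0$), the long exact sequence truncates to
$$
0 \to C' \xrightarrow{\partial} H^1(\Q, C) \xrightarrow{\iota_*} H^1(\Q, E[2]) \xrightarrow{\phi_*} H^1(\Q, C'),
$$
with an analogous sequence at each place $p$. At each such $p$ I would verify the commutative diagram
$$
\begin{array}{ccccc}
E'(\Q_p)/\phi(E(\Q_p)) & \xrightarrow{\hatphi} & E(\Q_p)/2E(\Q_p) & \xrightarrow{\phi} & E'(\Q_p)/\hatphi(E'(\Q_p)) \\
\downarrow \delta_\phi & & \downarrow \delta_2 & & \downarrow \delta_{\hatphi} \\
H^1(\Q_p, C) & \xrightarrow{\iota_*} & H^1(\Q_p, E[2]) & \xrightarrow{\phi_*} & H^1(\Q_p, C')
\end{array}
$$
by direct computation with the connecting-map cocycles and $\hatphi\phi = [2]_E$. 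Commutativity then yields the local inclusions $\iota_*(H^1_\phi(\Q_p, C)) \subseteq H^1_f(\Q_p, E[2])$ and $\phi_*(H^1_f(\Q_p, E[2])) \subseteq H^1_{\hatphi}(\Q_p, C')$, so that globally $\iota_*$ and $\phi_*$ restrict to maps $\Sel_\phi(E/\Q) \to \Sel_2(E/\Q) \xrightarrow{\phi_*} \Sel_\hatphi(E'/\Q)$ with $\phi_* \circ \iota_* = 0$.

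The remaining step is to identify the kernel of $\iota_*|_{\Sel_\phi}$ with $E'(\Q)[2]/\phi(E(\Q)[2])$. By the truncated global sequence this kernel equals $\partial(C') \cap \Sel_\phi(E/\Q)$, and one checks $\partial(c') = \delta_\phi(c')$ for $c' \in C'$ by choosing a preimage of $c' \in \phi(E[2])$ inside $E[2]$; this shows $\partial(C') \subseteq \Sel_\phi(E/\Q)$ automatically. To re-express this kernel as $E'(\Q)[2]/\phi(E(\Q)[2])$, I would extend the source from $C'$ to $E'(\Q)[2]$ via the restriction of the global Kummer map $\delta_\phi$: the commutativity $\iota_* \circ \delta_\phi = \delta_2 \circ \hatphi$ forces $\delta_\phi(E'(\Q)[2]) \subseteq \ker \iota_*$, and the kernel of $\delta_\phi|_{E'(\Q)[2]}$ equals $\phi(E(\Q)[2])$. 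The principal obstacle is precisely this identification: the purely cohomological image of $\partial$ is $C'$, and promoting the description to $E'(\Q)[2]$ requires carefully comparing $\partial$ with $\delta_\phi$ on rational $2$-torsion, tracking the rational $2$- and $4$-torsion structure on $E$ and $E'$.
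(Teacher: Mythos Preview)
Your approach is correct and is exactly the standard diagram chase; the paper itself gives no argument at all, simply citing Lemma~2 of \cite{FG}, so your sketch supplies precisely what the paper omits.

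Two remarks. First, in your local diagram the upper-right entry should be $E(\Q_p)/\hatphi(E'(\Q_p))$, not $E'(\Q_p)/\hatphi(E'(\Q_p))$, and the upper-right arrow is the natural quotient map (well-defined because $2E(\Q_p)=\hatphi\phi(E(\Q_p))\subseteq\hatphi(E'(\Q_p))$) rather than $\phi$; with this fix the square commutes via $\phi_*\delta_2(x)=\delta_{\hatphi}(x)$, checked by lifting $x$ to $y$ with $2y=x$ and noting $\hatphi(\phi(y))=x$.

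Second, your sketch verifies that $\iota_*$ and $\phi_*$ land in the correct Selmer groups and that $\phi_*\iota_*=0$, but it does not address the reverse inclusion needed for exactness at $\Sel_2(E/\Q)$. This is the one nontrivial step: given $c\in\Sel_2(E/\Q)$ with $\phi_*(c)=0$, the global long exact sequence produces $c'\in H^1(\Q,C)$ with $\iota_*(c')=c$, and one must check $c'\in\Sel_\phi(E/\Q)$. This reduces to the local equality $\iota_*^{-1}\big(H^1_f(\Q_p,E[2])\big)=H^1_\phi(\Q_p,C)$, which follows from your diagram once you note that $\ker\iota_*=\delta_\phi(C')\subseteq H^1_\phi(\Q_p,C)$ locally and that $\phi_*\delta_2(x)=\delta_{\hatphi}(x)=0$ forces $x\in\hatphi(E'(\Q_p))$, whence $\delta_2(x)\in\iota_*(H^1_\phi(\Q_p,C))$. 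Filling this in completes the chase.
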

\begin{proof}
This is a well known diagram chase. See Lemma 2 in \cite{FG} for example.
\end{proof}

The Tamagawa ratio $\T(E/E^\prime)$  gives a second relationship between $d_\phi(E/\Q)$ and $d_{\hat \phi}(E^\prime/\Q)$.

\begin{definition} The ratio $$\mathcal{T}(E/E^\prime) = \frac{ \big | \Sel_\phi(E/\Q)  \big |}{\big |\Sel_{\hat \phi}(E^\prime/\Q)\big |}$$ is called the \textbf{Tamagawa ratio} of $E$.
\end{definition}

\begin{theorem}[Cassels]\label{prodform2}
The Tamagawa ratio $\mathcal{T}(E/E^\prime)$ is given by $$\mathcal{T}(E/E^\prime) = \prod_{p}\frac{\left | H^1_\phi(\Q_p, C)\right |}{2}.$$
\end{theorem}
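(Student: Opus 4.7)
The plan is to derive the formula from the global Poitou--Tate / Greenberg--Wiles product formula applied to the Selmer structure on $C$ whose local conditions are the $H^1_\phi(\Q_p, C)$. There are three ingredients to assemble: identifying $C^\prime$ as the Cartier dual of $C$; showing that $H^1_\phi(\Q_p, C)$ and $H^1_\hatphi(\Q_p, C^\prime)$ are exact annihilators under local Tate duality; and feeding these into the global duality formula.

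First I would establish that $C$ and $C^\prime$ are both trivial $G_\Q$-modules isomorphic to $\Zt$, and that they are each other's Cartier duals. The hypothesis $E(\Q)[2] \simeq \Zt$ says $C$ has trivial Galois action. Restricting the Weil pairing $E[2] \times E[2] \to \mu_2$ to $C \times C^\prime$ produces a $G_\Q$-equivariant perfect pairing $C \times C^\prime \to \mu_2$, and since $-1 \in \Q$, $\mu_2$ has trivial action, so $C^\prime \simeq \mathrm{Hom}(C, \mu_2)$ is also a trivial module. In particular $|H^0(K, C)| = |H^0(K, C^\prime)| = 2$ for every field $K \supseteq \Q$, hence at every completion $\Q_p$ as well.

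Second I would check that at each place $p$ the subgroups $H^1_\phi(\Q_p, C)$ and $H^1_\hatphi(\Q_p, C^\prime)$ are mutual annihilators under the local Tate cup-product pairing $H^1(\Q_p, C) \times H^1(\Q_p, C^\prime) \to H^2(\Q_p, \mu_2) \simeq \Ftwo$. Each of these subgroups is the image under the Kummer connecting map of the local points on the target curve, and the duality between $\phi$ and $\hatphi$ coming from the Weil pairing translates to orthogonality of these Kummer images. A dimension count using Tate local duality and the local Euler characteristic formula then upgrades orthogonality to the full annihilator statement.

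With these two pieces in hand, the Greenberg--Wiles formula for dual Selmer structures yields
\[
\frac{|\Sel_\phi(E/\Q)|}{|\Sel_\hatphi(E^\prime/\Q)|} \;=\; \frac{|H^0(\Q, C)|}{|H^0(\Q, C^\prime)|}\prod_p \frac{|H^1_\phi(\Q_p, C)|}{|H^0(\Q_p, C)|}.
\]
Substituting $|H^0(\Q, C)| = |H^0(\Q, C^\prime)| = |H^0(\Q_p, C)| = 2$ from the first step collapses the right-hand side to $\prod_p |H^1_\phi(\Q_p, C)|/2$, which is the claim; note that for all but finitely many $p$ the local term equals $1$, so the product is well-defined. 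The main obstacle will be step two: the annihilator statement requires carefully threading the Kummer description of $H^1_\phi$ through the self-duality of $\phi$ under the Weil pairing in a manner compatible with the local Tate pairing. Once this compatibility is confirmed, steps one and three are routine bookkeeping with the standard global duality machinery.
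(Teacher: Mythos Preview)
Your argument is essentially correct and gives a genuine proof; the paper itself does not prove the statement but simply cites Cassels' 1965 paper \emph{Arithmetic on curves of genus 1, VIII}. What you have written is the now-standard Greenberg--Wiles (Poitou--Tate) derivation, which is a cleaner repackaging of the same global duality that Cassels assembles by hand. So the two ``approaches'' are really the same duality argument seen through different vocabularies: Cassels works directly with the descent pairing and the local/global exact sequences, while you invoke the general Selmer-group product formula and specialize. Your route is shorter once one is willing to quote Greenberg--Wiles as a black box; Cassels' route is self-contained.

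One technical slip to fix: you write that the duality between $C$ and $C^\prime$ comes from restricting the Weil pairing $E[2]\times E[2]\to\mu_2$ to $C\times C^\prime$. This cannot be right as stated, because $C^\prime=\ker\hat\phi$ lives in $E^\prime[2]$, not in $E[2]$. The correct pairing is the Weil pairing $e_\phi:\ker\phi\times\ker\hat\phi\to\mu_{\deg\phi}$ attached to the isogeny $\phi$ (see, e.g., Silverman, \emph{AEC}, III.8). Once you use $e_\phi$, your step~one goes through verbatim: both kernels are $G_\Q$-trivial of order~$2$, and $e_\phi$ exhibits each as the Cartier dual of the other. Step~two (that the Kummer images are exact annihilators under local Tate duality) is then the standard compatibility of $e_\phi$ with the descent maps, and step~three is a direct application of the Greenberg--Wiles formula exactly as you wrote it.
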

\begin{proof}
This is a combination of Theorem 1.1 and equations (1.22) and (3.4) in \cite{Cassels8}.
\end{proof}

Stepping back, we observe that if the Tamagawa ratio $\T(E/E^\prime) \ge 2^{r+2}$ , then $d_\phi(E/\Q) \ge r+2$, and therefore by Theorem \ref{gss}, $d_2(E/\Q) \ge r$. (If $E$ does not have a cyclic 4-isogeny defined over $\Q$ then we can in fact show that $\T(E/E^\prime) \ge 2^r$ implies that $d_2(E/\Q) \ge r$, but this is entirely unnecessary for our purposes.)

\section{Local Conditions at Twisted Places}\label{twistplace}

For the remainder of this paper, we will let $E$ be an elliptic curve with $E(\Q)[2] \simeq \Zt$ that does not have a cyclic 4-isogeny defined over $\Q(E[2])$ and let $\phi$ be the isogeny with kernel $C=E(\Q)[2]$ defined in Section \ref{bg}.

If $p \nmid 2\infty$ is prime where $E$ has good reduction, then $H^1_\phi(\Q_p, C)$ is a 1-dimensional $\F_2$ subspace of $H^1(\Q_p, C)$ equal to the unramified local subgroup $H^1_u(\Q_p, C)$. If $E$ has good reduction at $p \nmid 2$ and $p \mid d$, then the twisted curve $E^d$ will have bad reduction at $d$. The following lemma addresses the size of $H^1_\phi(\Q_p, C^d)$.

\begin{lemma}\label{localconds}
Suppose $p \ne 2$ is a prime where $E$ has good reduction and $d \in \Z$ is squarefree with $p \mid d$.

\begin{enumerate}[(i)]
\item If  $E(\Q_p)[2] \simeq \Zt \simeq E^\prime(\Q_p)[2]$, then $\dimF H^1_\phi(\Q_p, C^d) = 1$.
\item If  $E(\Q_p)[2] \simeq \Zt \times \Zt \simeq E^\prime(\Q_p)[2]$, then $\dimF H^1_\phi(\Q_p, C^d) = 1$.
\item If $E(\Q_p)[2] \simeq \Zt$ and $E^\prime(\Q_p)[2] \simeq \Zt \times \Zt$, then $\dimF H^1_\phi(\Q_p, C^d) = 2$.
\item If $E(\Q_p)[2] \simeq \Zt \times \Zt$ and $E^\prime(\Q_p)[2] \simeq \Zt$, then $\dimF H^1_\phi(\Q_p, C^d) = 0$.
\end{enumerate}
\end{lemma}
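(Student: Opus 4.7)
The plan is to reduce the computation to a local Tamagawa-number comparison. From the long exact sequence in cohomology attached to the isogeny $\phi^d : E^d \to E^{\prime d}$, the Kummer map identifies $H^1_\phi(\Q_p, C^d)$ with $E^{\prime d}(\Q_p)/\phi^d(E^d(\Q_p))$. For $p \neq 2$ the 2-isogeny $\phi^d$ is \'etale, so it is an isomorphism on the formal group and on the identity component of the Néron special fibre. The standard local Euler-characteristic computation underlying the proof of Theorem \ref{prodform2} in \cite{Cassels8} then yields
\[
|H^1_\phi(\Q_p, C^d)| \;=\; |C^d(\Q_p)| \cdot \frac{c_p(E^{\prime d})}{c_p(E^d)} \;=\; 2 \cdot \frac{c_p(E^{\prime d})}{c_p(E^d)},
\]
since $C \cong \Zt$ carries trivial Galois action that survives twisting, so $|C^d(\Q_p)| = 2$.

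It therefore suffices to compute $c_p(E^d)$ and $c_p(E^{\prime d})$. Because $E$ has good reduction at $p$ and $v_p(d) = 1$, both twists acquire Kodaira type $I_0^*$ at $p$, with geometric component group $\ZtZt$. For $p$ odd the identity component of the special fibre is $\mathbb{G}_a$, which has no 2-torsion, and the formal group is uniquely 2-divisible, so $E_0^d(\Q_p)[2] = 0$; reduction therefore embeds $E^d(\Q_p)[2]$ into the Galois-fixed part of the component group. Since the three non-trivial geometric 2-torsion points lift the three non-identity components bijectively, this embedding is an isomorphism, giving $c_p(E^d) = |E^d(\Q_p)[2]|$ and, analogously, $c_p(E^{\prime d}) = |E^{\prime d}(\Q_p)[2]|$.

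To finish, I would observe that twisting by $d$ rescales the nonzero $x$-coordinates of the 2-torsion by $d$, so $|E^d(\Q_p)[2]| = |E(\Q_p)[2]|$ and $|E^{\prime d}(\Q_p)[2]| = |E'(\Q_p)[2]|$. Substituting into the displayed formula and running through cases (i)--(iv), where $(|E(\Q_p)[2]|,\,|E'(\Q_p)[2]|)$ equals $(2,2)$, $(4,4)$, $(2,4)$, $(4,2)$ respectively, yields $|H^1_\phi(\Q_p, C^d)| = 2, 2, 4, 1$, and hence $\F_2$-dimensions $1, 1, 2, 0$, as claimed. The only non-bookkeeping ingredient is the Tamagawa identification $c_p(E^d) = |E^d(\Q_p)[2]|$ for an $I_0^*$ twist at an odd prime, which I expect to be the main technical hurdle but is a standard consequence of Tate's algorithm.
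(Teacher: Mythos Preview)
Your argument is correct, but it is a genuinely different route from the paper's. The paper's proof is a single line: it invokes Lemma~3.7 of \cite{K} to identify $H^1_\phi(\Q_p,C^d)\cong E^{\prime d}(\Q_p)[2^\infty]/\phi\bigl(E^d(\Q_p)[2^\infty]\bigr)=E^{\prime d}(\Q_p)[2]/\phi\bigl(E^d(\Q_p)[2]\bigr)$, after which the four cases are read off immediately from the sizes of $E^{\prime d}(\Q_p)[2]$ and $\phi(E^d(\Q_p)[2])$. You instead pass through the local Tamagawa formula $|H^1_\phi(\Q_p,C^d)|=2\cdot c_p(E^{\prime d})/c_p(E^d)$ and compute the Tamagawa numbers from the $I_0^*$ component group. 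Both arguments rest on the same underlying fact---for $p$ odd the pro-$p$ formal group and the additive identity component of the $I_0^*$ fibre carry no $2$-torsion, so only the $2$-torsion/component group contributes---but the paper packages this as ``$2^\infty$-torsion quotient equals $2$-torsion quotient'' via an external citation, while you unpack it through N\'eron models and Tate's algorithm. Your version is more self-contained and makes the Tamagawa-number interpretation explicit (which dovetails nicely with Theorem~\ref{prodform2}); the paper's version is shorter but leans on the reference \cite{K}.
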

\begin{proof}
Lemma 3.7 in \cite{K} shows that $E^{\prime d}(\Q_p)[2^\infty]/\phi( E^d(\Q_p)[2^\infty]) = E^{\prime d}(\Q_p)[2]/\phi( E^d(\Q_p)[2])$. All four results then follow immediately.
\end{proof}

The following proposition suggests that each of the cases in Lemma \ref{localconds} should occur equally often.
\begin{proposition}\label{charac}
If $E$ is an elliptic curve with $E(\Q)[2] \simeq \Zt$ that does not have a cyclic 4-isogeny defined over $\Q(E[2])$, then $E^\prime(\Q)[2]\simeq \Zt$ and $\Q(E[2]) \ne \Q(E^\prime[2])$.
\end{proposition}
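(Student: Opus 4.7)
The plan is to work with the explicit Weierstrass models from Section \ref{bg}. Writing $E: y^2 = x(x^2 + Ax + B)$ and $E': y^2 = x(x^2 - 2Ax + (A^2 - 4B))$, the nontrivial $2$-torsion of $E$ consists of $(0,0)$ together with $(\alpha, 0)$ for the two roots $\alpha$ of $x^2 + Ax + B$, so $K := \Q(E[2]) = \Q(\sqrt{A^2 - 4B})$; a parallel computation shows that the nontrivial $2$-torsion of $E'$ consists of $(0,0)$ and $(A \pm 2\sqrt{B}, 0)$, so $\Q(E'[2]) = \Q(\sqrt{B})$. The hypothesis $E(\Q)[2] \simeq \Zt$ says that $A^2 - 4B$ is not a square in $\Q$, so $K$ is a genuine quadratic extension.

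The main technical step I would establish is the following claim: $E$ admits a cyclic $4$-isogeny over $K$ with kernel containing $C$ if and only if $\sqrt{B} \in K$. For the forward direction, let $R \in E(\overline{\Q})$ have order $4$ with $2R = (0,0)$ and $\langle R \rangle$ a $G_K$-stable subgroup. Then $2\phi(R) = \phi(2R) = O$ and $\phi(R) \ne O$ (as $R \notin C$); moreover $\phi(R) \ne (0,0)$, for otherwise $\phi(R) \in \ker \hat\phi$ would give $R \in \ker([2]_E) = E[2]$, contradicting the order of $R$. Thus $\phi(R) \in \{(A + 2\sqrt{B}, 0),(A - 2\sqrt{B}, 0)\}$. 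Since $\langle R \rangle$ has exactly two elements of order $4$, namely $R$ and $-R$, stability forces $\sigma R \in \{R, -R\}$ for every $\sigma \in G_K$, giving $\sigma(\phi(R)) = \phi(\sigma R) = \phi(R)$, i.e., $\phi(R) \in E'(K)$; this requires $\sqrt{B} \in K$. Conversely, if $\sqrt{B} \in K$ then $Q := (A + 2\sqrt{B}, 0)$ lies in $E'(K)[2]$; any preimage $R \in \phi^{-1}(Q)$ satisfies $2R \in \ker \phi$, and in fact $2R = (0,0)$ (else $R \in E[2]$ and $\phi(R) = O$). The identity $R + (0,0) = -R$ holds precisely because $2R = (0,0)$, so $\phi^{-1}(Q) = \{R, -R\} \subset \langle R \rangle$; then $\phi(\sigma R) = \sigma Q = Q$ yields $\sigma R \in \{R, -R\} \subset \langle R \rangle$ for every $\sigma \in G_K$, producing the desired cyclic $4$-isogeny.

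Combining the claim with the hypothesis that $E$ has no cyclic $4$-isogeny over $K$ yields $\sqrt{B} \notin K$. In particular $B$ is not a square in $\Q$, so the only nontrivial $\Q$-rational $2$-torsion point of $E'$ is $(0,0)$ and $E'(\Q)[2] \simeq \Zt$; and $\Q(E'[2]) = \Q(\sqrt{B})$ is a quadratic extension of $\Q$ distinct from $K = \Q(\sqrt{A^2-4B})$. The main obstacle in this argument is the forward direction of the key claim, where one needs to see that the $G_K$-stability of $\langle R \rangle$ translates into $\phi(R) \in E'(K)$; everything else is bookkeeping around the explicit models.
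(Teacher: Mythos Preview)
Your argument is correct and rests on the same observation as the paper's: if $R\in E$ has order $4$ with $2R=(0,0)$ and $Q'=\phi(R)\in E'[2]\setminus C'$, then $\phi^{-1}(Q')=\{R,-R\}\subset\langle R\rangle$, so $\langle R\rangle$ is $G_K$-stable if and only if $Q'$ is $K$-rational. The paper carries this out coordinate-free (choosing $Q'\in E'[2]-C'$, lifting to $Q\in E[4]$, and using $\sigma(Q)\notin\langle Q\rangle$ to force $\sigma(Q')\ne Q'$), while you work through the explicit models to identify $\Q(E[2])=\Q(\sqrt{A^2-4B})$ and $\Q(E'[2])=\Q(\sqrt{B})$ directly; your version yields the concrete description of the two fields as a bonus.

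One small remark on your exposition: to deduce $\sqrt{B}\notin K$ from the hypothesis that $E$ has no cyclic $4$-isogeny over $K$, you are using the contrapositive of the \emph{converse} direction of your claim (if $\sqrt{B}\in K$ then a cyclic $4$-isogeny exists), not the forward direction. The forward direction you prove is correct but superfluous, so the ``main obstacle'' you flag is in fact the part you do not need.
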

\begin{proof}
Let $Q^\prime \in E^\prime[2] - C^\prime$, $C = \langle P \rangle$, and take $Q \in E[4]$ with $\phi(Q) = Q^\prime$. Since $Q^\prime \in E^\prime(\Q)[2] - C^\prime$, and both $\phi \circ \hatphi = [2]_{E^\prime}$ and $\hatphi \circ \phi = [2]_{E}$, it follows that $2Q = \hatphi(Q^\prime) = P.$ Let $M = \Q(E[2])$. Since $E$ has no cyclic 4-isogeny defined over $M$, there exists $\sigma \in G_M$ such that $\sigma(Q) \not \in \langle Q \rangle = \left \{0 , Q, P, Q + P \right \}.$ In particular, since $\phi^{-1}(Q^\prime) \subset \langle Q \rangle$, we get that $\phi(\sigma(Q)) \ne Q^\prime$. We then get that $$\sigma(Q^\prime) = \sigma(\phi(Q)) = \phi(\sigma(Q)) \ne Q^\prime,$$ showing that $Q^\prime$ is not defined over $M$, and therefore that $\Q(E^\prime[2]) \not \subset M$. It then follows that $\Q(E[2])$ and $\Q(E^\prime[2])$ are disjoint quadratic extensions of $\Q$ and that $E^\prime(\Q)[2] \simeq \Zt$.
\end{proof}

\section{Proof of Main Theorems}\label{pfofmain}
In this section we prove Theorems \ref{mainthm} and \ref{Tconverge} by analyzing the behavior of $\T(E/E^\prime)$ under quadratic twist and employing a variant of the Erd{\"o}s-Kac theorem.

We begin by recalling the following definition.

\begin{definition}
A function $g:\N \rightarrow \R$ is called \textbf{additive} if $g(nm) = g(n) + g(m)$ whenever $n$ and $m$ are relatively prime.
\end{definition}

If $g(n)$ is an additive function, then the classical Erd{\"o}s-Kac theorem gives the distribution of $g(n)$ under mild hypothesis. The following variant of the Erd{\"o}s-Kac theorem is for additive functions defined on the set of squarefree numbers $S$.

\begin{theorem}\label{EKSF}
Let $S$ be the set of squarefree natural numbers and suppose that $g:S \rightarrow \R$ is an  additive function such that $|g(p)| \le 1$ for all primes p. Let $$A(x) = \sum_{p \le x} \frac{g(p)}{p} \text{ and } B(x) = \sqrt {\sum_{p \le x} \frac{g(p)^2}{p}},$$ where the sums are taken over all primes less than or equal to $x$. If $B(x) = \omega \left( \log \log\log(x) \right )$ (in the asymptotic sense), then $$\displaystyle {v_x(n; g(n) - A(x) \le zB(x) ) = \frac{|\{n \in S(x) : g(n) - A(x) \le zB(x) \}|}{|S(x) |}} $$ converges weakly (i.e. pointwise in $z$) to $$G(z) = \frac{1}{\sqrt{2\pi}} \int_{-\infty}^{z} \! e^\frac{w^2}{2} \, \mathrm{d}w.$$
\end{theorem}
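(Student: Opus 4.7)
The plan is to prove weak convergence via the classical method of moments, adapted to the set of squarefree integers. Since the standard Gaussian is uniquely determined by its moments, it suffices to show that for each fixed $k \in \Z^{\ge 0}$,
\begin{equation*}
M_k(x) \;:=\; \frac{1}{|S(x)|} \sum_{n \in S(x)} \left( \frac{g(n) - A(x)}{B(x)} \right)^k \;\longrightarrow\; \mu_k \qquad (x \to \infty),
\end{equation*}
where $\mu_{2j} = (2j)!/(2^j j!)$ and $\mu_{2j+1} = 0$ are the Gaussian moments. Since $g$ is additive and $n$ is squarefree, $g(n) = \sum_{p \mid n} g(p)$, so the numerator equals $\sum_{p \le x} g(p)(\mathbf{1}_{p \mid n} - 1/p)$, a sum of almost-mean-zero weights indexed by primes.

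The main number-theoretic input is the standard M\"obius sieve estimate: for a squarefree integer $m \le \sqrt{x}$,
\begin{equation*}
\#\{ n \in S(x) : m \mid n \} \;=\; |S(x)| \cdot \frac{1}{m} \prod_{p \mid m} \frac{p}{p+1} \;+\; O(\sqrt{x}).
\end{equation*}
This tells us that under the uniform measure on $S(x)$, the events $\{ p \mid n \}$ for distinct small primes behave as approximately independent Bernoulli variables with parameter $1/(p+1)$. The shift from $1/p$ to $1/(p+1)$ perturbs $A(x)$ by only a bounded amount because $\sum_p |g(p)|/(p(p+1)) < \infty$, which is negligible once divided by $B(x) \to \infty$.

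With this in hand I would expand $M_k(x)$ as a multi-sum over $k$-tuples $(p_1,\dots,p_k)$ of primes and group the terms according to the set-partition $\pi$ of $\{1,\dots,k\}$ induced by coincidences $p_i = p_j$. A partition containing any singleton contributes negligibly because the corresponding centered factor has mean $0$; a block of size $\ell \ge 2$ on a prime $p$ has $\mathbb{E}\bigl[(\mathbf{1}_{p\mid n} - 1/p)^\ell \bigr] = O(1/p)$, so using $|g(p)| \le 1$ the corresponding factor contributes at most $\sum_p g(p)^\ell/p \le B(x)^2$. Hence a partition with $r$ non-singleton blocks contributes $O(B(x)^{2r})$, and for $k = 2j$ only the $(2j)!/(2^j j!)$ pure pair-partitions achieve the maximal $r = j$, giving the leading contribution $B(x)^{2j}$ and hence $M_{2j}(x) \to \mu_{2j}$. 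Odd $k$ yields $o(1)$ since no partition without singletons can reach $r \ge k/2$.

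The principal obstacle is a truncation issue: the sieve bound is only useful for $m \le \sqrt{x}$, so $k$-tuples involving several primes close to $x$ lie outside its range. To address this I would introduce a truncation parameter $y = y(x) \to \infty$ with $y \le x^{1/(3k)}$, split the prime sum at $y$, and bound the tail $\sum_{y < p \le x} g(p)(\mathbf{1}_{p \mid n} - 1/p)$ in $L^2(S(x))$ by $B(x)^2 - B(y)^2 = O(\log \log x - \log \log y)$. Making this $o(B(x)^2)$ demands $B(y) \sim B(x)$, and the hypothesis $B(x) = \omega(\log \log \log x)$ is exactly what permits such a choice of $y$ to be made compatibly with $y \le x^{1/(3k)}$. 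This truncation and error management is the technical heart of the proof, mirroring the role played by the analogous step in the classical Erd\"os-Kac theorem.
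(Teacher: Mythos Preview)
Your proposal is correct and follows the same method-of-moments strategy as the paper. Both arguments reduce to showing that the $k$th normalized moment converges to the Gaussian moment $\mu_k$, and both accomplish this by truncating the prime sum, using a sieve estimate for $\#\{n\in S(x): m\mid n\}$ to treat the divisibility events as approximately independent, and then bounding the tail. The paper packages the moment computation by quoting Granville--Soundararajan's Theorem~4 as a black box, whereas you sketch it directly via the partition combinatorics; these amount to the same calculation.

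There is one substantive difference worth noting. The paper uses a single $k$-independent truncation $Y(X)=X^{1/B(X)^{2/3}}$, which forces the pointwise tail bound $|g(n)-g_{Y(X)}(n)|\le B(X)^{2/3}$ and then requires $B(X)^{2/3}=o(B(Y(X)))$; tracking this through is exactly where the hypothesis $B(x)=\omega(\log\log\log x)$ enters. Your $k$-dependent truncation $y=x^{1/(3k)}$ is actually stronger: any $n\le x$ has at most $3k$ prime factors exceeding $y$, so $|g(n)-g_y(n)|\le 3k+O(1)$ \emph{pointwise}, and $B(x)^2-B(y)^2\le \sum_{y<p\le x}1/p=\log(3k)+O(1)=O_k(1)$. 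Thus for each fixed $k$ your argument needs only $B(x)\to\infty$. So your closing remark that the hypothesis $B(x)=\omega(\log\log\log x)$ is ``exactly what permits'' your choice of $y$ is not right---your scheme does not use it, and the hypothesis is really an artifact of the paper's uniform-in-$k$ truncation.
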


\begin{proof}
See Appendix \ref{apf}.
\end{proof}

For $n \in S$, define an additive function $g(n)$ by $$g(n) = \sum_{\genfrac{}{}{0pt}{}{p \mid n}{p \nmid 2\Delta \infty}} \frac{ \left ( \frac{\Delta^\prime}{p} \right ) - \left(  \frac{\Delta}{p}  \right ) }{2}$$ where $\Delta$ is the discriminant of some integral model of $E$ and $\Delta^\prime$ is the discriminant of some integral model of $E^\prime$. That is, $g(d)$ roughly counts the difference between the number of primes dividing $d$ where condition $(iii)$ of Proposition \ref{localconds} is satisfied and the number of primes dividing $d$ where condition $(iv)$ is satisfied. The value $g(d)$ can therefore be connected to the Tamagawa ratio $\T(E^d/E^{\prime d})$ in the following manner.

\begin{proposition}\label{TT}
The order of $2$ in the Tamagawa ratio $\T(E^d/E^{\prime d})$ is given by $$\ord_2 \T(E^d/E^{\prime d}) = g(d) + \sum_{v |2\Delta\infty} \left ( \dimF H^1_\phi(\Q_v, C^d)  - 1 \right ).$$

\end{proposition}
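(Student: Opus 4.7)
The plan is to apply Cassels's product formula (Theorem \ref{prodform2}) to the twist pair $(E^d, E^{\prime d})$ and take $\ord_2$ of both sides. Since each local group $H^1_\phi(\Q_v, C^d)$ is an $\Ftwo$-vector space of nonnegative dimension, this gives
$$\ord_2 \T(E^d/E^{\prime d}) = \sum_v \left ( \dimF H^1_\phi(\Q_v, C^d) - 1 \right ),$$
so the task reduces to showing that the contribution from primes $p \nmid 2\Delta\infty$ equals $g(d)$. For any such $p$ with $p \nmid d$, the twisting parameter $d$ is a unit square in $\Q_p$, so $E^d \simeq E$ over $\Q_p$ and $E^d$ has good reduction at $p$; by the remark at the start of Section \ref{twistplace}, $H^1_\phi(\Q_p, C^d)$ equals the one-dimensional unramified subgroup and contributes $0$. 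Thus only primes $p \mid d$ with $p \nmid 2\Delta\infty$ remain to be analyzed.

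For each such prime, Lemma \ref{localconds} gives $\dimF H^1_\phi(\Q_p, C^d) - 1 \in \{-1, 0, 1\}$ according to the four possibilities for the pair $(E(\Q_p)[2], E^\prime(\Q_p)[2])$. The heart of the proof is matching these four cases to the Legendre symbols defining $g$. Writing $E: y^2 = x(x^2 + Ax + B)$ gives $\Delta = 16 B^2 (A^2 - 4B)$, so for $p \nmid 2\Delta$ we have $\left(\tfrac{\Delta}{p}\right) = \left(\tfrac{A^2 - 4B}{p}\right)$, which is $+1$ exactly when $x^2 + Ax + B$ splits over $\Q_p$, i.e., when $E(\Q_p)[2] \simeq \ZtZt$. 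The analogous calculation using $\Delta^\prime = 256\, B (A^2 - 4B)^2$ shows $\left(\tfrac{\Delta^\prime}{p}\right) = \left(\tfrac{B}{p}\right)$, which is $+1$ exactly when $E^\prime(\Q_p)[2] \simeq \ZtZt$. A direct check of the four cases of Lemma \ref{localconds} then gives
$$\dimF H^1_\phi(\Q_p, C^d) - 1 = \frac{\left(\tfrac{\Delta^\prime}{p}\right) - \left(\tfrac{\Delta}{p}\right)}{2}$$
in every case, and summing over $p \mid d$ with $p \nmid 2\Delta\infty$ produces exactly $g(d)$.

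The main obstacle is the sign bookkeeping: one has to verify that the Legendre symbols of the \emph{global} model discriminants $\Delta$ and $\Delta^\prime$ really do detect the splitting of the $\Q_p$-rational $2$-torsion on $E$ and $E^\prime$ at every good-reduction prime, and that the four contributions from Lemma \ref{localconds} line up in sign with the differences $(\left(\tfrac{\Delta^\prime}{p}\right) - \left(\tfrac{\Delta}{p}\right))/2$ (in particular, case (iv) of the lemma must produce the contribution $-1$, since $H^1_\phi$ is trivial there). Once this identification is pinned down, the identity follows by simply splitting Cassels's product into places in $2\Delta\infty$ and the remaining good-reduction primes.
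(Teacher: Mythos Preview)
Your approach is the same as the paper's: apply Cassels's product formula to the twist, then split off the places $v\mid 2\Delta\infty$ and use Lemma~\ref{localconds} together with the Legendre-symbol description of $E(\Q_p)[2]$ and $E^\prime(\Q_p)[2]$ to identify the remaining sum with $g(d)$. Your explicit discriminant calculation ($\Delta=16B^2(A^2-4B)$, $\Delta^\prime=256B(A^2-4B)^2$) is correct and in fact supplies detail the paper leaves implicit.

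There is one slip. For $p\nmid 2\Delta\infty$ with $p\nmid d$, you assert that ``$d$ is a unit \emph{square} in $\Q_p$, so $E^d\simeq E$ over $\Q_p$.'' This is false in general: $d$ is merely a $p$-adic unit, and $E^d$ need not be isomorphic to $E$ over $\Q_p$. The conclusion you want still holds, but for a different reason: since $p\nmid d$ and $p\nmid 2\Delta$, the twist $E^d$ has good reduction at $p$ (its discriminant is $d^6\Delta$, a unit at $p$), and then the remark at the start of Section~\ref{twistplace} applied to $E^d$ gives $\dimF H^1_\phi(\Q_p,C^d)=1$. With that correction, your argument goes through and matches the paper's.
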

\begin{proof}
By Thereom \ref{prodform2}, $\ord_2 \T(E^d/E^{\prime d})$ is given by $$\displaystyle{ \ord_2 \T(E^d/E^{\prime d}) = \sum_{v |2d\Delta\infty} \left ( \dimF H^1_\phi(\Q_v, C^d)  - 1 \right ).}$$ Since $E(\Q_p)[2] \simeq \Zt \times \Zt$ if and only $\left ( \frac{\Delta}{p} \right )= 1$ and $E^\prime(\Q_p)[2] \simeq \Zt \times \Zt$ if and only $\left ( \frac{\Delta^\prime}{p} \right )= 1$ for primes $p \mid d$ with $p\nmid 2\Delta$, Lemma \ref{localconds} gives us that $$\dimF H^1_\phi(\Q_p, C^d) - 1 =  \frac{ \left ( \frac{\Delta^\prime}{p} \right ) - \left(  \frac{\Delta}{p}  \right ) }{2} $$ for places $p \mid d$ with $p\nmid 2\Delta\infty$.
\end{proof}

The following proposition will allow us to evaluate $A(x)$ and $B(x)$ for $g(n)$.

\begin{proposition}\label{mertenS}
Let $c$ be a non-square integer. Then $$\sum_{p \le x}\frac{1 +\left (\frac{c}{p} \right ) }{p} = \log \log x + O(1).$$
\end{proposition}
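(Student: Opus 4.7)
The plan is to split the sum into a ``trivial'' half that produces the $\log\log x$ and a ``character'' half that is $O(1)$:
\begin{equation*}
\sum_{p \le x}\frac{1+\left(\frac{c}{p}\right)}{p} \;=\; \sum_{p \le x}\frac{1}{p} \;+\; \sum_{p \le x}\frac{\left(\frac{c}{p}\right)}{p}.
\end{equation*}
By Mertens' theorem, the first sum equals $\log\log x + M + O(1/\log x)$, so the whole task reduces to showing that the second sum is bounded as $x \to \infty$.

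First I would rewrite the Legendre symbol as a Dirichlet character. Let $d$ be the squarefree part of $c$, so that $\left(\frac{c}{p}\right) = \left(\frac{d}{p}\right)$ for every $p \nmid 2c$. By quadratic reciprocity (and supplementary laws), the map $p \mapsto \left(\frac{d}{p}\right)$ is the restriction to primes of a real Dirichlet character $\chi$ of modulus $N$, where $N$ divides $4|d|$. Because $c$ is a non-square, $d \neq 1$, and so $\chi$ is \emph{non-principal}. Adjusting finitely many terms (the primes dividing $2c$) only changes the partial sums by $O(1)$, so it suffices to prove that $\sum_{p \le x} \chi(p)/p$ is bounded uniformly in $x$ for any non-principal Dirichlet character $\chi$.

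The hard step, and the only non-elementary ingredient, is the boundedness of $\sum_{p \le x} \chi(p)/p$. The standard route is via the Dirichlet $L$-function $L(s,\chi) = \sum_{n\ge 1}\chi(n)/n^s$, which for a non-principal character extends to an entire function with $L(1,\chi) \neq 0$ (Dirichlet's theorem). Taking logarithms of the Euler product gives
\begin{equation*}
\log L(s,\chi) \;=\; \sum_{p} \frac{\chi(p)}{p^s} \;+\; \sum_{p}\sum_{k\ge 2}\frac{\chi(p)^k}{k\,p^{ks}}
\end{equation*}
for $\Re(s) > 1$; the double sum converges absolutely for $\Re(s) \ge 1/2$, so $\sum_p \chi(p)/p^s$ is bounded as $s \to 1^+$. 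Combined with Abel summation, this boundedness transfers to the partial sums $\sum_{p \le x} \chi(p)/p$, giving the desired $O(1)$ bound.

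Putting the two pieces together yields $\sum_{p \le x}(1+(c/p))/p = \log\log x + O(1)$, as required. The only genuinely delicate input is the non-vanishing $L(1,\chi) \ne 0$; everything else is bookkeeping with quadratic reciprocity and Mertens' formula.
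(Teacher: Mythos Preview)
Your argument is correct and is precisely the standard proof of this fact; the paper itself gives no independent argument, merely citing ``Lemma 2.11 in \cite{Elliot1}'' (Elliott, \emph{Probabilistic Number Theory I}), which is a Mertens-type estimate for primes in arithmetic progressions proved by the same route you outline. The one place to tighten is the passage from boundedness of $\sum_p \chi(p)p^{-s}$ as $s\to 1^+$ to boundedness of the partial sums $\sum_{p\le x}\chi(p)/p$: Abel summation alone runs in the wrong direction here, but the usual fix---deduce $\sum_{p\le x,\ p\equiv a\ (N)} 1/p = \phi(N)^{-1}\log\log x + O(1)$ from $L(1,\chi)\ne 0$ and then combine residue classes with weights $\chi(a)$---is routine and uses nothing beyond what you have already invoked.
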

\begin{proof}
This is an application of Lemma 2.11 in \cite{Elliot1}.
\end{proof}
\begin{proof}[Proof of Theorem \ref{Tconverge}]
We wish to apply Theorem \ref{EKSF} to $g(n)$. We may rewrite $A(x)$ as $$\frac{1}{2}\sum_{\genfrac{}{}{0pt}{}{\ord_p n\text{ odd}}{p \nmid 2\Delta \infty}}\frac{1 +\left (\frac{\Delta^\prime}{p} \right ) }{p} - \frac{1}{2}\sum_{\genfrac{}{}{0pt}{}{\ord_p n\text{ odd}}{p \nmid 2\Delta \infty}}\frac{1 +\left (\frac{\Delta}{p} \right ) }{p}.$$ As $\Delta^\prime$ is not a square by Proposition \ref{charac}, we therefore get that $A(x) = O(1)$ by Proposition \ref{mertenS}. We can rewrite $B(x)$ as $$B(x) =\sqrt{  \displaystyle{ \sum_{\genfrac{}{}{0pt}{}{p \le x,  p \nmid 2\Delta\infty }{ \left (\frac{\Delta}{p} \right ) \ne \left ( \frac{\Delta^\prime}{p} \right )} }\frac{1}{p}}} =\sqrt{ \frac{1}{2}\displaystyle{ \sum_{\genfrac{}{}{0pt}{}{p \le x}{p \nmid 2\Delta\infty }}\frac{1 - \left (\frac{\Delta\Delta^\prime}{p} \right ) }{p}}}.$$ 
By Proposition \ref{charac}, $\Delta$ and $\Delta^\prime$ do not differ by a square, so therefore $B(x) = \displaystyle{\sqrt{ \frac{1}{2}\log \log x} + O(1)}$ by Proposition \ref{mertenS}. Applying Theorem \ref{EKSF} to $g(n)$, we then get that $$v_x \left ( n; \frac{g(n) }{\sqrt{\frac{1}{2} \log \log x}}\le z \right )$$ converges weakly to $G(z)$. By Proposition \ref{TT}, $g(d) = \ord_2 \T(E^d/E^{\prime d}) + O(1)$, so the result follows. 

\end{proof}

\begin{proof}[Proof of Theorem \ref{mainthm}]
By Theorem \ref{Tconverge}, $$\lim_{X \rightarrow \infty} \displaystyle{ \frac{ |\{d \in S(X) : \ord_2 \T(E^d/E^{\prime d}) \ge r \}| }{|S(X)|} } = \frac{1}{2}$$ for any fixed $r \ge 0$. As $d_2(E^d/\Q) \ge  \ord_2 \T(E^d/E^{\prime d}) - 2$, this shows that for any $\epsilon > 0$, $$\displaystyle{ \frac{ |\{d \in S(X) : d_2 (E^d/\Q) \ge r \}| }{|S(X)|} } \ge \frac{1}{2} - \epsilon$$ for sufficiently large $X$. As twisting $E$ by $-d$ is equivalent to twisting $E^{-1}$ by $d$, the remainder of the result follows.
\end{proof}

\bibliography{citations.bib}

\begin{thebibliography}{KMR11}

\bibitem[Cas65]{Cassels8}
J.W.S. Cassels.
\newblock Arithmetic on curves of genus 1. {VIII}: On the conjectures of {Birch
  and Swinnerton-Dyer}.
\newblock {\em Journal f{\"u}r die reine und angewandte Mathematik (Crelles
  Journal)}, 1965(217):180--199, 1965.

\bibitem[CR62]{CohenRobinson}
E.~Cohen and RL~Robinson.
\newblock On the distribution of the k-free integers in residue classes.
\newblock {\em Acta Arith}, 8:283--293, 1962.

\bibitem[Ell79]{Elliot1}
P.~Elliott.
\newblock {\em Probabilistic number theory {I}: Mean-value theorems}.
\newblock Grundlehren der mathematischen Wissenschaften, 1979.

\bibitem[FG08]{FG}
EV~Flynn and C.~Grattoni.
\newblock Descent via isogeny on elliptic curves with large rational torsion
  subgroups.
\newblock {\em Journal of Symbolic Computation}, 43(4):293--303, 2008.

\bibitem[GS07]{GS}
A.~Granville and K.~Soundararajan.
\newblock Sieving and the {E}rd{\H{o}}s--{Kac} theorem.
\newblock {\em Equidistribution in number theory, an introduction}, pages
  15--27, 2007.

\bibitem[HB94]{HB}
DR~Heath-Brown.
\newblock The size of {Selmer} groups for the congruent number problem, {II}.
\newblock {\em Inventiones mathematicae}, 118(1):331--370, 1994.

\bibitem[Kan10]{Kane}
D.M. Kane.
\newblock On the ranks of the 2-{Selmer} groups of twists of a given elliptic
  curve.
\newblock {\em Preprint available at http://arxiv. org/abs/1009.1365}, 2010.

\bibitem[Kla11]{K}
Zev Klagsbrun.
\newblock Selmer ranks of quadratic twists of elliptic curves with partial
  rational two-torsion.
\newblock {\em {Arxiv} preprint available at http://arxiv.org/abs/1201.5408},
  2011.

\bibitem[KMR11]{KMR}
Z.~Klagsbrun, B~Mazur, and K.~Rubin.
\newblock Selmer ranks of quadratic twists of elliptic curves.
\newblock {\em Preprint available at http://arxiv.org/pdf/1111.2321v1}, 2011.

\bibitem[SD08]{SD}
P.~Swinnerton-Dyer.
\newblock The effect of twisting on the 2-{Selmer} group.
\newblock In {\em Mathematical Proceedings of the Cambridge Philosophical
  Society}, volume 145, pages 513--526. Cambridge Univ Press, 2008.

\bibitem[XZ08]{XZ}
M.~Xiong and A.~Zaharescu.
\newblock Distribution of {Selmer} groups of quadratic twists of a family of
  elliptic curves.
\newblock {\em Advances in Mathematics}, 219(2):523--553, 2008.

\end{thebibliography}

\appendix

\section{An Erd{\"o}s-Kac Theorem for Squarefree Numbers}\label{apf}

The purpose of this appendix is to prove the following:
 
\noindent \textbf{Theorem \ref{EKSF}.} \textit{
Let $S$ be the set of squarefree natural numbers and suppose that $g:S \rightarrow \R$ is an  additive function such that $|g(p)| \le 1$ for all primes p. Let $$A(x) = \sum_{p \le x} \frac{g(p)}{p} \text{ and } B(x) = \sqrt {\sum_{p \le x} \frac{g(p)^2}{p}}.$$ If $B(x) = \omega \left( \log \log\log(x) \right )$ (in the asymptotic sense), then $$\displaystyle {v_x(n;g(n) - A(x) \le zB(x) ) = \frac{|\{n \in S(x) : f(n) - A(x) \le zB(x) \}|}{|S(x) |}} $$ converges weakly (i.e. pointwise in $z$) to $$G(z) = \frac{1}{\sqrt{2\pi}} \int_{-\infty}^{z} \! e^\frac{w^2}{2} \, \mathrm{d}w.$$
}

The proof we present here is based on the framework developed by Granville and Soundararajan in \cite{GS} which is outlined in the following section.

\subsection{Sieving and the Erd{\"o}s-Kac Theorem}\label{sEK}

Let $\A$ be a finite sequence of natural numbers $a_1, a_2, \ldots, a_N$, $\p$ a set of primes, and $g: \A \rightarrow \R$ an additive function supported on $\p$ such that $|g(p)| \le 1$ for every prime $p \in \p$. The goal is to identify the distribution of $g(n)$ on $\A$ and we pursue this by approximating its moments.

For any $d \in \A$, we define $\A_d = \{ n \in \A : d \mid n \}$. Suppose that there is some multiplicative function $h(n) : \A \rightarrow \R$ such that we may write $|\A_d|$ as $|\A_d| = \frac{h(d)}{d}N + r_d$ for every $d \in \A$, where we think of $\frac{h(d)}{d}$ as the approximate proportion of elements $\A$ divisible by $d$ and $r_d$ as the error. If the errors $r_d$ are sufficiently small, then the moments will be close to those of a normal distribution. We define approximations of the mean and variance of $g(n)$ by $$\mu_\p(g) = \sum_{p \in \p}g(p)\frac{h(p)}{p} \text{ and }\sigma_\p(g)^2= \sum_{p \in \p} g(p)^2\frac{h(p)}{p}\left (1- \frac{h(p)}{p} \right ).$$

We then have the following:

\begin{theorem}[Theorem 4 in \cite{GS}]\label{KSthm} For $k \in \N$, let $\mathcal{D}_k(\p)$ be the set of squarefree products of $k$ or fewer primes in $\p$ that are contained in $\A$. Define $C_k  = \dfrac{\Gamma(k+1)}{2^\frac{k}{2}\Gamma(\frac{k}{2} +1) }$. Then, uniformly for all even positive integers $k \le \sigma_\p(g)^\frac{2}{3}$ we have
 
$$\sum_{n \in \A}\left ( g(n) - \mu_\p(g)\right )^k = C_kN \sigma_\p(g)^k \left (1 + O \left (\frac{k^3}{\sigma_\p(g)^2} \right ) \right ) + O\left ( \left ( \sum_{p \in \p} \frac{h(p)}{p} \right )^k\sum_{d \in \mathcal{D}_k(\p) } |r_d| \right )$$ and uniformly for all odd positive integers $k \le \sigma_\p(g)^\frac{2}{3}$, $\sum_{n \in \A}\left ( g(n) - \mu_\p(g) \right )^k $ satisfies \begin{equation}\label{oddk}\sum_{n \in \A}\left ( g(n) - \mu_\p(g)\right )^k   \ll C_kN\sigma_\p(g)^k\frac{k^\frac{3}{2} }{\sigma_\p(g)} +  \left ( \sum_{p \in \p} \frac{h(p)}{p} \right )^k\sum_{d \in \mathcal{D}_k(\p)} |r_d|.\end{equation} 
\end{theorem}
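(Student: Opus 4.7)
The plan is to use the method of moments via direct combinatorial expansion. Set $Y_p(n) := \mathbf{1}_{p \mid n} - h(p)/p$ for $p \in \p$, so that additivity of $g$ together with its support on $\p$ gives $g(n) - \mu_\p(g) = \sum_{p \in \p} g(p) Y_p(n)$ for $n \in \A$. The multinomial expansion then reduces the $k$-th centered moment to
\[
\sum_{n \in \A} (g(n) - \mu_\p(g))^k = \sum_{(p_1,\ldots,p_k) \in \p^k} g(p_1)\cdots g(p_k) \sum_{n \in \A} Y_{p_1}(n) \cdots Y_{p_k}(n),
\]
and the task becomes evaluating the inner sum for each tuple.

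Group $k$-tuples $(p_1,\ldots,p_k)$ by their multiplicity vector $(m_p)_p$ with $\sum_p m_p = k$, so that $\prod_i Y_{p_i}(n) = \prod_p Y_p(n)^{m_p}$. Expanding each $Y_p(n)^{m_p}$ via the binomial theorem and applying the hypothesis $|\A_d| = h(d)N/d + r_d$ for each squarefree $d$ dividing $\prod_p p$ (which lies in $\mathcal{D}_k(\p)$ by construction) gives an exact decomposition into a main algebraic term depending only on $h$, plus an error controlled by $|r_d|$. A short direct computation shows that the main-term factor contributed by a single prime $p$ of multiplicity $m_p$ vanishes when $m_p = 1$, equals $\frac{h(p)}{p}\bigl(1-\frac{h(p)}{p}\bigr)$ when $m_p = 2$, and is bounded in absolute value by $\frac{h(p)}{p}$ in general. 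Consequently the dominant contribution comes from tuples that are perfect pairings: exactly $k/2$ distinct primes each occurring with multiplicity $2$, which requires $k$ to be even.

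For even $k$, the number of pairings of $\{1,\ldots,k\}$ is $k!/(2^{k/2}(k/2)!) = C_k$, and summing $g(p)^2 \frac{h(p)}{p}(1-\frac{h(p)}{p})$ over $p \in \p$ gives exactly $\sigma_\p(g)^2$, producing the announced main term $C_k N \sigma_\p(g)^k$. The remaining tuples either use fewer than $k/2$ distinct primes or assign multiplicity $\ge 3$ to some prime; each such defect from pure pairing reduces the power of $\sigma_\p(g)^2$ available while introducing only a bounded combinatorial factor, and a careful count stratified by multiplicity profile shows the total relative error is $O(k^3/\sigma_\p(g)^2)$. For odd $k$ no pairing on all $k$ indices exists, so the pair-matching main term is absent and the entire sum collapses to these smaller contributions, which for $k$ odd have size $C_k N \sigma_\p(g)^k \cdot k^{3/2}/\sigma_\p(g)$.

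The main obstacle is the uniformity of the bookkeeping in the correction terms: one must count $k$-tuples stratified by number of distinct primes and multiplicity profile, using the condition $|g(p)| \le 1$ to control the weights, and extract the precise relative error $O(k^3/\sigma_\p(g)^2)$ in the regime $k \le \sigma_\p(g)^{2/3}$, while tracking that the combinatorial constants remain compatible with $C_k$. Once this is in place, the $r_d$ packaging is routine: each $Y_p$ factor in the multinomial expansion contributes at worst a multiple of $h(p)/p$ to the coefficient of the corresponding indicator, and summing over all tuples gives the advertised bound $\bigl(\sum_{p \in \p} \frac{h(p)}{p}\bigr)^k \sum_{d \in \mathcal{D}_k(\p)} |r_d|$, matching the second $O$-term in both the even and odd cases.
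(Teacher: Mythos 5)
A point of orientation first: the paper does not prove this statement. It is imported verbatim as Theorem 4 of Granville--Soundararajan \cite{GS}, and the ``proof'' in the paper is the citation. So the comparison is really between your sketch and the argument of \cite{GS} --- and your decomposition is exactly theirs: write $g(n)-\mu_\p(g)=\sum_{p\in\p}g(p)\bigl(\mathbf{1}_{p\mid n}-h(p)/p\bigr)$, expand the $k$-th power, use $|\A_d|=h(d)N/d+r_d$ together with multiplicativity of $h$ to factor the main term prime by prime, and observe that the per-prime factor vanishes at multiplicity $1$ and equals $\frac{h(p)}{p}\bigl(1-\frac{h(p)}{p}\bigr)$ at multiplicity $2$, so that perfect pairings dominate and contribute $C_kN\sigma_\p(g)^k$. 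That skeleton is correct (one small caveat: the identity $g(n)=\sum_{p\mid n}g(p)$ uses strong additivity, $g(p^a)=g(p)$, which is harmless here because the intended $\A$ consists of squarefree numbers, but should be flagged).

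The genuine gap is that everything quantitative in the theorem is asserted rather than proved. The sentences ``a careful count stratified by multiplicity profile shows the total relative error is $O(k^3/\sigma_\p(g)^2)$'' and ``for $k$ odd \ldots have size $C_kN\sigma_\p(g)^k\cdot k^{3/2}/\sigma_\p(g)$'' \emph{are} the theorem; the whole difficulty is the uniformity in $k\le\sigma_\p(g)^{2/3}$. It does not follow from ``each defect from pure pairing \ldots introduc[es] only a bounded combinatorial factor'': the combinatorial factors are not bounded --- the number of ways to distribute $k$ indices over $j<k/2$ distinct primes grows polynomially in $k$ relative to the number of perfect matchings, and extracting precisely $k^3/\sigma_\p(g)^2$ (even case) and $k^{3/2}/\sigma_\p(g)$ (odd case) from that count, using $|g(p)|\le 1$ and $0\le h(p)/p\le 1$ to bound each profile's weight by powers of $\sigma_\p(g)^2$, is where all the work lies. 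Likewise, the $r_d$ packaging needs an actual verification that the total coefficient multiplying $|r_d|$, summed over all tuples with squarefree kernel $d$, is at most $\bigl(\sum_{p\in\p}h(p)/p\bigr)^k$; your one-sentence justification is plausible but not a proof. Nothing here is a wrong turn --- it is the right route, the one taken in \cite{GS} --- but as written the proposal restates the conclusion at exactly the step where the estimate has to be earned.
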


Theorem \ref{KSthm} can be used in the following way. Suppose $\A$ is an infinite sequence and $g:\A \rightarrow \R$ an additive function with $|g(p)| \le 1$ for every prime $p$. For $N \in \N$, we may define $\A(N)$ as $a_1, a_2, \ldots a_N$ and $\p(N) = \{ p \text{ prime } : p \le Y(N) \}$ for some appropriately chosen function $Y(N)$. We then define $g_N(n)$ as $$g_{N}(n) = \sum_{\genfrac{}{}{0pt}{}{p \in \p(N)}{ p \mid n}} g(p)$$ and apply Theorem \ref{KSthm} to $g_N$ on $\A(N)$. For notational purposes, let $\mu_N := \mu_{\p(N)}(g_N)$ and $\sigma_N := \sigma_{\p(N)}(g_N)$. If $Y(N)$ is chosen appropriately so that 
\begin{enumerate}[(i)]
\item the errors $\frac{k^3}{\sigma_N^2}$ and $\frac{k^\frac{3}{2} }{\sigma_N}$ both tend to $0$ and $N \rightarrow \infty$,
\item $$\frac{\left ( \sum_{p \in \p(N)} \frac{h(p)}{p} \right )^k\sum_{d \in \mathcal{D}_k(\p(N)) } |r_d|}{\sigma_N^k} = o(N), \text{ and}$$
\item $g_N(n) = g(n) + o(\sigma_N)$,
 \end{enumerate}

then the moments of $\frac{g(n) - \mu_N}{\sigma_N}$ tend to $C_k$ for $k$ even and to zero for $k$ odd as $N \rightarrow \infty$.

Suppose that we are in the special case where $\A$ is an increasing sequence and let  $\mu_g(N) = \sum_{p \le a_N}g(p)\frac{h(p)}{p}$ and $\sigma^2_g(N) = \sum_{p \le a_N}g(p)^2\frac{h(p)}{p} \left ( 1 - \frac{h(p)}{p} \right )$. If we additionally have that $\sigma_g(N) \rightarrow \infty$ and that $$\sigma_g(N) = \sigma_N + o(\sigma_N) \text{ and } \mu_g(N) = \mu_N + o(\sigma_N),$$ then we actually get that the moments of $\frac{g(n) - \mu_g(N)}{\sigma_g(N)}$ tend to $C_k$ for $k$ even and $0$ for $k$ odd as $N \rightarrow \infty$. Since the $k^{th}$ moments of the standard normal distribution $\mathcal{N}(0,1)$ are $0$ for $k$ odd and $C_k$ for $k$ even and the normal distribution $\mathcal{N}(0,1)$ is determined by its moments, we  then have that $$\displaystyle {v_x \left ( n; \frac{g(n) - \mu_g(x)}{\sigma_g(x)} \le z \right ) = \frac{|\{n \in \A, n \le x: \frac{g(n) - \mu_g(x)}{ \sigma_g(x)}\le z \}|}{|\{n \in \A, n \le x \} |}}$$ converges weakly to $$G(z) = \frac{1}{\sqrt{2\pi}} \int_{-\infty}^{z} \! e^\frac{w^2}{2} \, \mathrm{d}w.$$

This will all be the case for the set of squarefree numbers which we examine in the next section.

\subsection{Application to Squarefree Numbers}\label{pf}

Let $S$ be the sequence of squarefree natural numbers and suppose that $g:S \rightarrow \R$ is an additive function with $|g(p)| \le 1$ for every prime $p$. As $S$ is increasing and $a_N = O(N)$, rather than considering the first $N$ elements of $S$, we will instead work with $S(X) = \{ n \in S : n \le X)$ and adjust the notation accordingly. Define $h(d) =  \frac{d}{\sigma(d)}$ where $\sigma(d)$  is the sum of the divisors of $d$. The function $h(d)$ is multiplicative and we have the following:

\begin{proposition}\label{bdr}
For $d \in S(X)$, $r_d = O(X^\frac{3}{4}).$
\end{proposition}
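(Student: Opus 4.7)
The plan is to count $|S_d(X)|$ directly by a standard squarefree sieve, identify the leading term as $(h(d)/d)|S(X)|$, and bound the residual error.

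First, since $d \in S(X)$ is squarefree, every squarefree $n \le X$ with $d \mid n$ has a unique expression $n = dm$ where $m \le X/d$ is squarefree and $\gcd(m,d) = 1$. This reduces the problem to evaluating $|S_d(X)| = \sum_{m \le X/d,\, (m,d)=1} \mu^2(m)$.

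Next, I would expand $\mu^2(m) = \sum_{k^2 \mid m} \mu(k)$ and interchange the order of summation. Only $k$ with $(k,d) = 1$ can contribute, and the elementary count $\#\{j \le Z : (j,d) = 1\} = (\varphi(d)/d) Z + O(2^{\omega(d)})$ (by Möbius inversion, with $\varphi$ Euler's totient and $\omega(d)$ the number of distinct prime divisors of $d$) yields
\[ |S_d(X)| = \frac{\varphi(d) X}{d^2} \sum_{\substack{k \le \sqrt{X/d} \\ (k,d) = 1}} \frac{\mu(k)}{k^2} + O\bigl(2^{\omega(d)} \sqrt{X/d}\bigr). \]
Completing the sum over $k$ to infinity (the tail contributes $O(\sqrt{d/X})$) and invoking the identity $\prod_{p \mid d}(1 - p^{-2}) = \varphi(d) \sigma(d)/d^2$, valid because $d$ is squarefree, identifies the main term as $6X/(\pi^2 \sigma(d))$, with total error of size $O(2^{\omega(d)} \sqrt{X/d})$.

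Finally, combining with the classical asymptotic $|S(X)| = (6/\pi^2) X + O(X^{1/2})$ and noting that $h(d)/d = 1/\sigma(d)$ gives
\[ r_d = |S_d(X)| - \frac{h(d)}{d}|S(X)| = O\bigl(2^{\omega(d)} \sqrt{X/d}\bigr) + O\bigl(X^{1/2}/\sigma(d)\bigr). \]
Since $2^{\omega(d)} = O_\eps(d^\eps)$ by the standard divisor bound and $d \le X$, both terms are $O(X^{1/2 + \eps})$, which is comfortably $O(X^{3/4})$. The argument is entirely routine; the only real work lies in the bookkeeping for the sieve step, and the gap between $X^{1/2 + \eps}$ and the claimed $X^{3/4}$ leaves substantial slack, so no delicate argument is required.
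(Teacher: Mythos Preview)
Your argument is correct and in fact sharper than what the paper does. The paper takes a different route: it quotes a result of Cohen--Robinson on the count of squarefree integers in a residue class modulo $d^2$, sums over the $\phi(d)$ classes divisible by $d$, and arrives at
\[
|S_d(X)| = \frac{6}{\pi^2}\,\frac{X}{\sigma(d)} + O\bigl(d\sqrt{X}\bigr).
\]
Because the error here grows with $d$, the paper must then split into the ranges $d \le X^{1/4}$ (where $d\sqrt{X} \le X^{3/4}$) and $d > X^{1/4}$ (where both $|S_d(X)|$ and $(h(d)/d)|S(X)|$ are trivially $O(X^{3/4})$). Your direct sieve via the substitution $n = dm$ and the expansion $\mu^2(m) = \sum_{k^2 \mid m}\mu(k)$ is entirely self-contained, needs no case split, and yields the uniformly stronger bound $r_d = O(X^{1/2+\eps})$. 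The paper's approach is more modular in that it delegates the analytic work to a citation, while yours trades that for a short explicit computation and a better exponent.
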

\begin{proof}
The basic idea here is that if $d$ is squarefree, then the squarefree numbers should distribute approximately uniformly within the set of squarefree classes modulo $d^2$. Let $a$ be a squarefree integer with $0<a<d^2$. By Corollary 1 to Theorem 1 in \cite{CohenRobinson}, $$\left | \left \{ n \in S(X): n \equiv a \pmod{d^2} \right \} \right | = \dfrac{6}{\pi^2}\dfrac{1}{\displaystyle{  \prod_{p \mid d} (p^2 -1)} }X + O\left (\sqrt{X} \right ).$$ There are $\phi(d)$ squarefree classes modulo $d^2$ that are congruent to zero modulo $d$. This yields \begin{equation}\label{divd}\left | \left \{ n \in S(X): d \mid n  \right \} \right | = \dfrac{6}{\pi^2}\dfrac{1}{\displaystyle{  \prod_{p \mid d} (p+1)} }X + O\left (d\sqrt{X} \right ) =  \dfrac{6}{\pi^2}\frac{1}{\sigma(d)}X +  O\left (d\sqrt{X} \right ).\end{equation} 

As $|S(X)| = \dfrac{6}{\pi^2}X +   O\left (\sqrt{X} \right )$ and  $\frac{h(d)}{d} = \frac{1}{\sigma(d)}$, this proves the result for $d = O(X^\frac{1}{4})$. However, for $d = \Omega(X^\frac{1}{4})$, both $|S(X)_d| = O(N^\frac{3}{4})$ and $\frac{h(d)}{d}X = O(X^\frac{3}{4})$, so it follows that $r_d = O(X^\frac{3}{4})$ for such $d$ as well.
\end{proof}

We can now prove Theorem \ref{EKSF}.

\begin{proof}[Proof of Theorem \ref{EKSF}]


Set $Y(X) = X^{\frac{1}{B(X)^\frac{2}{3}}}$ and define $$\mu_X = \sum_{p \le Y(X)}g(p)\frac{h(p)}{p} \text{ and }\sigma_X^2 = \sum_{p \le Y(X)}g(p)^2\frac{h(p)}{p}\left (1- \frac{h(p)}{p} \right ).$$ As $\frac{h(p)}{p} = \frac{1}{p+1}$, we then get that $$\mu_X = \sum_{p \le Y(X)}\frac{g(p)}{p+1} = \sum_{p \le Y(X)}\frac{g(p)}{p} + O(1) = A(Y(X)) + O(1)$$ and $$\sigma_X^2 = \sum_{p \le Y(X)}\frac{g(p)^2}{p} + O(1) = B(Y(X))^2 + O(1).$$ As we assume that $B(X) \rightarrow \infty$ as $X \rightarrow \infty$, it then follows that $Y(X) \rightarrow \infty$ as $X \rightarrow \infty$ and therefore that $B(Y(X)) \rightarrow \infty$ and $\sigma_X\rightarrow \infty$ as $X \rightarrow \infty$. We therefore get that the error terms $\frac{k^3}{\sigma_X^2}$ and $\frac{k^\frac{3}{2} }{\sigma_X}$ both tend to $0$ as $X \rightarrow \infty$ for fixed $k$.

Next, consider $$\frac{\left ( \sum_{p \le Y(X)} \frac{h(p)}{p} \right )^k\sum_{d \in \mathcal{D}_k(Y(X)) } |r_d|}{\sigma_X^k},$$ where $\mathcal{D}_k(Y(X))$  is the set of products of $k$ or fewer primes less than or equal to $Y(X)$ contained in $S(X)$. Trivial estimates show that $|\mathcal{D}_k(Y(X))| = O\left ( kX^\frac{k}{B(X)^\frac{2}{3}}\right )$. As $B(X) \rightarrow \infty$, for fixed $k$ we therefore have that $|\mathcal{D}_k(Y(X))| = o(X^\epsilon)$ for any $\epsilon \ge 0$. Similarly, Mertens' Theorem shows that $$\left ( \sum_{p \le Y(X) } \frac{1}{p+1} \right )^k  = (\log \log Y(X) + O(1))^k$$ and therefore that $$\left ( \sum_{p \le Y(X) } \frac{1}{p+1} \right )^k = o(X^\epsilon)$$ for any $\epsilon \ge 0$ as well. Combined with Proposition \ref{bdr}, we then get that  $$\frac{\left ( \sum_{p \le Y(X)} \frac{h(p)}{p} \right )^k\sum_{d \in \mathcal{D}_k(Y(X)) } |r_d|}{\sigma_X^k} = O(X^{\frac{3}{4} + \epsilon})$$ for any $\epsilon > 0$.

Since a number $n \le X$ can have at most ${B(X)}^\frac{2}{3}$ prime factors greater than $X^{\frac{1}{B(X)^\frac{2}{3}}}$, it therefore follows $$g(n) - g_X(n) = g(n)-\sum_{\genfrac{}{}{0pt}{}{p \le Y(X)}{p \mid n}} g(p) =\sum_{\genfrac{}{}{0pt}{}{Y(X) < p \le X}{p \mid n}} g(p)  \le {B(X)}^\frac{2}{3}.$$ In order for $g(n) - g_X(n) = o(\sigma_X)$, it therefore suffices for $B(X) = \sigma_X + o(\sigma_X)$. Along with the discussion at the end of Section \ref{sEK}, it is then enough to show that $B(X) = \sigma_X +  o(\sigma_X)$ and that $A(X) = \mu_X + o(\sigma_X)$ to complete the proof.

Recalling that $\mu_X = A(Y(X)) + O(1)$ and $\sigma_X = B(Y(X)) + O(1)$, we have $$A(X) - A(Y(X)) \le \sum_{X^{\frac{1}{B(X)^\frac{2}{3}}} \le p \le X} \frac{1}{p} = \log \log X - \log \log X^{\frac{1}{B(X)^\frac{2}{3}}}  + O(1) = O(\log \log \log X),$$ with the first equality coming from Merten's theorem and the final equality following from the fact that $B(X) = O(\sqrt{\log \log X})$.

Similarly, for sufficiently large $X$, we have $$B(X) - B(Y(X)) \le  B(X)^2 - B(Y(X))^2 \le \sum_{X^{\frac{1}{B(X)^\frac{2}{3}}} \le p \le X} \frac{1}{p} =  O(\log \log \log X).$$ The fact that $B(X) = O(\sqrt{\log \log X})$ and the assumption that $B(X) = \omega \left( \log \log\log(x) \right )$ then give that $B(X) = \sigma_X +  o(\sigma_X)$ and $A(X) = \mu_X + o(\sigma_X)$.
\end{proof}

\end{document}